\documentclass[11pt,a4paper]{article}
\usepackage[T1]{fontenc}
\usepackage[utf8]{inputenc}
\usepackage{hyperref}
\usepackage{mathtools}
\usepackage{amsfonts}
\usepackage{amsthm}
\usepackage{amssymb}
\usepackage[shortlabels]{enumitem}
\usepackage{graphicx}
\usepackage[dvipsnames]{xcolor}
\usepackage{keyval}
\usepackage{tikz}
\usepackage{tikz-cd}
\usepackage{blindtext}
\usepackage{xurl}

\newtheorem{theorem}{Theorem}[section]
\newtheorem{lemma}[theorem]{Lemma}
\newtheorem{corollary}[theorem]{Corollary}
\newtheorem{fact}[theorem]{Fact}
\theoremstyle{definition}
\newtheorem{definition}[theorem]{Definition}

\newcommand{\id}{\mathrm{id}}

\newcommand{\pushout}[4]{\begin{tikzcd}[ampersand replacement=\&] #1 \arrow[r]\arrow[d] \& #2 \arrow[d]\\ #3 \arrow[r] \& #4 \arrow[ul, pos=0, phantom, "\ulcorner"] \end{tikzcd}}
\newcommand{\hq}{\mathbin{/\!/}}

\newcommand{\Type}{\mathcal{U}}
\newcommand{\refl}{\operatorname{refl}}
\newcommand{\Prop}{\operatorname{Prop}}

\newcommand{\Contr}{\operatorname{Contr}}
\newcommand{\proj}{\mathrm{proj}}

\newcommand{\inr}{\operatorname{inr}}
\newcommand{\inl}{\operatorname{inl}}
\newcommand{\inx}{\operatorname{in}}
\newcommand{\tr}[2][]{\left\lVert #2\right\rVert_{#1}}

\newcommand{\ap}{\mathit{ap}}
\newcommand{\glue}{\operatorname{glue}}

\newcommand{\SP}{\operatorname{SP}}
\newcommand{\qtwo}{q}
\newcommand{\stwo}{s}
\newcommand{\Shor}{\psi}
\newcommand{\SPhor}{\varphi}
\newcommand{\Sinl}{\alpha}
\newcommand{\Sinr}{\beta}
\newcommand{\SPinl}{\gamma}
\newcommand{\SPinr}{\epsilon}
\newcommand{\col}{\operatorname{col}}
\newcommand{\colAt}{\operatorname{colAt}}
\newcommand{\cb}{\operatorname{cb}}
\newcommand{\ColR}{\operatorname{ColR}}
\newcommand{\rhobar}{\rho}
\newcommand{\Lam}{\Lambda}
\newcommand{\LL}{L}
\newcommand{\WW}{W}

\usetikzlibrary{arrows.meta,positioning}

\usepackage{geometry}
\title{\bfseries The third symmetric product of a set is a set in HoTT}
\author{Wojciech Paupa}
\date{September 2026}

\begin{document}

\maketitle

\begin{abstract}
    We deformalize an LLM-generated proof within the formalism of Homotopy Type Theory that an iterated pushout construction for $\SP^3(X)$ is a set whenever $X$ is a set. The proof expands on a similar proof by Buchholtz for $\SP^2(X)$, employing a similar encode-decode strategy and the same formal machinery.
\end{abstract}

\section*{Introduction}
Homotopy Type Theory (HoTT) is a foundation for mathematics based around Martin-L\"of Type Theory extended with the univalence axiom and higher inductive types, as described in \cite{book}. Types in HoTT can be interpreted as logical formulae or as $\omega$-groupoids, which in the homotopy interpretation correspond to certain topological spaces, up to homotopy equivalence. Thus, many homotopy-invariant constructions defined on such spaces have their direct analogues in constructions defined on homotopy types. The symmetric products, despite being homotopy invariant, are defined as strict colimits, and so their HoTT analogue is not a priori impossible to define, but also not directly translatable. In \cite{upairs}, a pushout definition for the second symmetric product was proposed, and it was proven that it maps sets to sets. A sketch of the definition of the third symmetric product was also provided, and it was hypothesized that it also maps sets to sets. In this work, we provide a pushout definition of the third symmetric product and prove that it satisfies that claim.

The version of HoTT we work in is the one described in \cite{book} (often called Book HoTT), with higher inductive types, function and pair extensionality (implied by univalence), a hierarchy of univalent universes, and without a cubical interval type.

The proof of the main theorem (theorem \ref{t:sp3set}) was generated and formalized using an Arend + Claude Fable toolchain, and then later deformalized in this paper.
\section{Methodology}
The formal proofs of the theorems in this paper are available at \url{https://github.com/WPaupa/symmetric-products}, with the proof of theorem \ref{t:sp3set} located in \verb|src/Experiments/SP3|. The work refers to commit \verb|9866e8c|.
\subsection{Arend}
\href{https://github.com/arend-lang/Arend}{Arend} is a type-theoretic proof assistant developed at JetBrains Research with the aim of providing native support for constructive and univalent mathematics (see \cite{arend}). Its foundation is homotopy type theory with an interval type equipped with computational axioms and a built-in way of defining functions by path induction. Unlike the cubical type theory, Arend does not satisfy full canonicity for natural numbers. However, the homotopical features of related theories, like path types, higher inductive types, and univalence, are present in the language. Arend's features also include universes of truncated types, code inference, and metaprogramming, all of which proved relevant to the project.

The version of Arend used in this project is 1.11, and syntax used in the repository is currently incompatible with Arend 1.12.
\subsection{LLM Toolchain}
The proof was generated and formalized using an agentic loop between Claude Fable 5 in the chat interface, Claude Opus 5 running Claude Code, and the Arend standalone compiler, as illustrated in figure \ref{fig:agentic-loop}. The information flow between the models was manual.
\begin{figure}[!h]
  \centering
  \begin{tikzpicture}[
    font=\small,
    agent/.style={
      draw, rounded corners=2pt, line width=0.6pt,
      minimum height=1.3cm, minimum width=3cm,
      align=center, inner sep=7pt
    },
    flow/.style={
      -{Stealth[length=2.3mm,width=1.6mm]},
      line width=0.7pt
    },
    feedback/.style={flow, dashed},
    label/.style={
      font=\footnotesize, align=center, inner sep=3pt
    }
  ]
    % Participants.
    \node (user) at (0,2.05) {\textbf{User}};
    \node[agent] (planner) at (0,0)
      {\textbf{Claude Fable 5}\\[2pt]Planning};
    \node[agent] (coder) at (6.2,0)
      {\textbf{Claude Opus 5}\\[2pt]Claude Code};
    \node[agent, minimum width=2.2cm] (arend) at (11.5,0)
      {\textbf{Arend}\\[2pt]Type checker};

    % Task and implementation plan.
    \draw[flow] (user.south) --
      node[label, right] {Task} (planner.north);
    \draw[flow] (planner.east) --
      node[label, above] {Implementation\\plan} (coder.west);

    % Implementation and type-checker feedback.
    \draw[flow] (coder.north east)
      to[out=30,in=150]
      node[label, above] {Arend code} (arend.north west);
    \draw[feedback] (arend.south west)
      to[out=210,in=-30]
      node[label, below] {Error reports} (coder.south east);

    % Conceptual feedback to the planning agent.
    \draw[feedback] (coder.south)
      .. controls +(0,-1.45) and +(0,-1.45) ..
      node[label, below] {Conceptual problems} (planner.south);
  \end{tikzpicture}
  \caption{Proof formalization workflow. Dashed arrows indicate feedback.}
  \label{fig:agentic-loop}
\end{figure}

The planning agent was a fresh instance without custom instructions, while the coding agent used a set of helper files (called skills) published by Sergey Sinchuk at \url{https://github.com/sxhya/arend-skills}. It worked in conjunction with command-line tools for LLM agents available within the Arend compiler, including search for theorem patterns, search for definition usage, and diagnostics in a machine-readable format. With these tools the agents are less likely to examine files manually, which speeds up the workflow and reduces token usage.
\subsection{Deformalization}
Since the idea behind the proof was refined during formalization, its main points were sometimes obfuscated. This led to unused definitions and functions whose names didn't match their final roles. One additional LLM pass was used after finishing the proof to refactor the code and make it more human-readable, which helped to isolate the main lemmas. The deformalization process then consisted of going through the statements of the lemmas and reconstructing their proofs by hand, referring back to the formalized code whenever difficulties arose.
\section{Conventions and preliminaries}
Throughout the article, $\equiv$ is used for judgmental equality (or equality up to computation), and $=$ is used for propositional equality. For example, if we say that $a=b$ for $a,b:A$, we mean that the type $a=_Ab$ is inhabited. We use $\tr{A}$ for the propositional truncation. We write $\mathbf{n}$ for the $n$-point type, here using its inductive definition with $\mathbf{0}$ defined as the empty type, $\mathbf{1}$ defined as a type given by a single point constructor $*:\mathbf{1}$, and $\mathbf{m}=\mathbf{n}\sqcup\mathbf{1}$ for $m=n+1$. Finally, we write $\Type$ for a universe, omitting its level whenever not relevant. 
\subsection{Symmetric products}
\begin{definition}
    In classical algebraic topology, the $n$-th symmetric product of a topological space $X$ is the following quotient space:
    \begin{equation}
        \SP^nX:\equiv X^n/\Sigma_n\equiv X^n/(x_1,\dots,x_n)\sim(x_{\sigma(1)},\dots,x_{\sigma(n)}),\sigma\in\Sigma_n
    \end{equation}
    In other words, $\SP^nX$ is the space of unordered $n$-tuples of elements of $X$ with a quotient topology inherited from $X^n$.
\end{definition}
This construction is used for example in the Dold-Thom theorem (see section 4.K in \cite{hatcher}), a classic result bridging the theory of homology and homotopy groups.
\subsection{Pushouts}
\begin{definition}
    In Homotopy Type Theory, a pushout of $f:A\to B$ and $g:A\to C$ written as:
    \begin{equation}
        \begin{tikzcd}
            A \arrow[r, "f"] \arrow[d, "g"] & B \arrow[d] \\
            C \arrow[r] & D \arrow[ul, pos=0, phantom, "\ulcorner"]
        \end{tikzcd}
    \end{equation}
    is the higher inductive type $D$ given by constructors:
    \begin{equation}
        \begin{cases}
            \inx_B:B\to D\\
            \inx_C:C\to D\\
            \glue:\prod_{a:A}\inx_B(f(a))=\inx_C(g(a))
        \end{cases}.
    \end{equation}
\end{definition}
The join $A*B$ is the pushout of $\proj_1:A\times B\to A$ and $\proj_2:A\times B\to B$. For the point constructors of the join, we will write $\inl:A\to A*B$ and $\inr:B\to A*B$ instead of $\inx_A$ and $\inx_B$, the notation slightly conflicting with the notation for the disjoint union $A\sqcup B$. The mapping cone of $f:A\to B$ is the pushout $C$ of $f$ and the unique function $A\to\mathbf{1}$. Instead of $\inx_Bb,\inx_{\mathbf{1}}*$, we will just write $\inx b,*$ whenever unambiguous.

To define a (dependent) function out of a pushout, we will use pushout induction, which means defining it on the point constructors $\inx_B,\inx_C$ and then proving a (dependent) identification between its values lying over the relevant $\glue$ path. When the codomain of the function is a proposition, we can omit the identification.
\begin{fact}
    \label{f:joincontr}
    If $A$ or $B$ is contractible, then so is $A*B$. If $B$ is a proposition and $a:A$, then $A*B$ is contractible if and only if there exists an identification $\prod_{a':A}\inx_Aa=_{A*B}\inx_Aa'$. 
\end{fact}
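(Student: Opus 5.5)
The plan is to prove the two claims of Fact~\ref{f:joincontr} separately, each by constructing a center of contraction and then a contraction via pushout induction on $A*B$.

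\emph{First claim.} Suppose $B$ is contractible with center $b_0$; the case of contractible $A$ is symmetric. I take $\inr b_0$ as the center and define $h:\prod_{z:A*B}\inr b_0=z$ by pushout induction:
\begin{itemize}
\item For $z\equiv\inr b$, I set $h(\inr b):\equiv\ap_{\inr}(c_b)$, where $c_b:b_0=b$ is the contraction of $B$.
\item For $z\equiv\inl a$, I set $h(\inl a):\equiv\ap_{\inr}(c_{b_0})^{\,}\cdot\glue(a,b_0)^{-1}$. More simply, I can use $\glue(a,b_0)^{-1}$ after first normalising the contraction so that $c_{b_0}=\refl$.
\item Over $\glue(a,b)$, transport in the family $z\mapsto \inr b_0=z$ is post-composition. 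So the required identification is
\[h(\inl a)\cdot\glue(a,b)=h(\inr b).\]
\end{itemize}
This is the main obstacle. I expect to resolve it by generalising $b$ using $c_b$, which amounts to path induction after rewriting $b$ as $b_0$ along $c_b$. That reduces the goal to $\glue(a,b_0)^{-1}\cdot\glue(a,b_0)=\refl$, which holds.

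\emph{Second claim, forward direction.} If $A*B$ is contractible, then any two points are equal. In particular $\inl a=\inl a'$ for all $a'$.

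\emph{Second claim, converse.} Suppose $B$ is a proposition, $a:A$, and $p:\prod_{a'}\inl a=\inl a'$. I take $\inl a$ as the center and define $h:\prod_z \inl a=z$ by pushout induction:
\begin{itemize}
\item On $\inl a'$, I set $h(\inl a'):\equiv p(a')$.
\item On $\inr b$, I set $h(\inr b):\equiv p(a)\cdot\glue(a,b)$, or any fixed choice.
\item Over $\glue(a',b)$, I need $p(a')\cdot\glue(a',b)=p(a)\cdot\glue(a,b)$.
\end{itemize}
This coherence is the main obstacle, since it is not automatic. I would sidestep it by first showing that $A*B$ is a proposition, and then contractibility follows from having the point $\inl a$. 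To show it is a proposition, I would show that $\inl a=z$ for every $z$ in a way where the $\glue$ case targets a proposition.

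Concretely, I would use the standard fact that for a proposition $B$, if $b:B$ then $B$ is contractible, so $A*B$ is contractible by the first claim. Hence the family $z\mapsto\mathrm{isContr}(A*B)$ is a proposition, and I can prove it by induction on $z$ instead:
\begin{itemize}
\item On $\inr b$, it holds by the first claim applied to $B$ pointed at $b$.
\item On $\inl a'$, it suffices to contract $A*B$ given the path $p$. Here I would define $h$ as above, but choose
\[h(\inr b):\equiv p(a')^{\,}\ldots\]
\end{itemize}
I would first try to dodge this by noting that the type $\prod_z\inl a=z$ restricted to $\glue$ is a family of paths in $\inl a=\inr b$. For each $b$, $A*B$ is already contractible by the first claim, so this family is a set with all paths equal. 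The coherence over $\glue$ is therefore an equality in a proposition (paths in a contractible type) and holds automatically.

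Thus the final plan is to define $h$ as above and discharge the $\glue$ coherence using that, in presence of $b:B$, $A*B$ is contractible by the first claim, so all its path types are contractible.
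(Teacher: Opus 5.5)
Your argument is correct. The paper states this as a Fact without proof, so there is no paper argument to compare against, and your direct pushout-induction proof fills the gap.

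\textbf{First claim.} With $h(\inl a):\equiv\glue(a,b_0)^{-1}$, no normalisation of the contraction is needed. Based path induction on $q:b_0=b$ proves $\glue(a,b_0)^{-1}\cdot\glue(a,b)=\ap_{\inr}(q)$ for every $q$; this is just naturality of the null-homotopy $\glue(a,\cdot)$ of $\inr$. You then instantiate at $q:\equiv c_b$.

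\textbf{Second claim.} Delete the middle detour where you prove $\Contr(A*B)$ by induction on a point $z$. Its $\inl a'$ case is circular, and the definition of $h(\inr b)$ there is left unfinished. Your final plan stands on its own: set $h(\inl a'):\equiv p(a')$ and $h(\inr b):\equiv p(a)\cdot\glue(a,b)$. The coherence over $\glue(a',b)$ is then an equation in $\inl a=_{A*B}\inr b$. Because $b:B$ is in context and $B$ is a proposition, $B$ is contractible, so $A*B$ is contractible by the first claim, and that path type is a proposition. This is the right observation, and it is exactly what the later uses of the Fact in the paper (Lemmas \ref{l:Wprop} and \ref{l:Qz0contr}) need.

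\textbf{Shorter alternative.} If you want a briefer proof of the first claim: when $B$ is contractible, $\proj_1:A\times B\to A$ is an equivalence, and the map opposite an equivalence in a pushout square is an equivalence. Hence $\inr:B\to A*B$ is an equivalence. That route cites a general lemma, while your hands-on induction avoids it.
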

\subsection{Borel product}
\begin{definition}
    The type $B\Sigma_n$ is the type of (small) types merely equivalent to $\mathbf{n}$:
    \begin{equation}
        B\Sigma_n:\equiv\sum_{X:\Type}\tr{X\simeq \mathbf{n}},
    \end{equation}
    In other words, $B\Sigma_n$ is the type of unordered $n$-element types. We then define the $n$-th Borel product of $X$ as:
    \begin{equation}
        X^n\hq \Sigma_n:\equiv\sum_{A:B\Sigma_n}(\proj_1A\to X)
    \end{equation}
    or the type of tuples over $X$ indexed by unordered $n$-element types.
\end{definition}
We will identify elements of $B\Sigma_n$ with the $n$-element types representing them, and use set algebra on them whenever it is unambiguous to do so. For example, for $K:B\Sigma_n$ and $K':B\Sigma_m$, $K\sqcup K':B\Sigma_{n+m}$ is the unique element $K''$ of $B\Sigma_{n+m}$ such that $\proj_1K''=\proj_1K\sqcup\proj_1K'$. For $(K,f):X^n\hq \Sigma_n$ and $(K',f'):X^m\hq \Sigma_m$ we will then write $(K\sqcup K',f\sqcup f'):X^{n+m}\hq \Sigma_{n+m}$. We will also write $k:K$ instead of $k:\proj_1K$ for $K:B\Sigma_n$. Finally, we may write the labelling function as $(x_0,\dots,x_{n-1})$ when the indexing type is $\mathbf{n}$.

For $(K,f),(K',f'):X^n\hq \Sigma_n$, a path $(K,f)=_{X^n\hq \Sigma_n}(K',f')$ consists of an equivalence $e:K\simeq K'$ and an identification $\prod_{k:K}f(k)=f'(e(k))$. For short, a path in $X^n\hq \Sigma_n$ is an equivalence commuting with the labelling functions.

\begin{fact}
    The type $B\Sigma_n$ is connected and if $X$ is connected, then so is $X^n\hq \Sigma_n$. 
\end{fact}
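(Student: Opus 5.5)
The statement has two parts: $B\Sigma_n$ is connected, and $X^n\hq\Sigma_n$ is connected whenever $X$ is. Throughout, connected means merely inhabited with all elements merely equal, i.e.\ the $0$-truncation is contractible.

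\textbf{Connectedness of $B\Sigma_n$.} Take $(\mathbf{n},|\id|)$ as the centre. Given $(K,p):B\Sigma_n$, we want to show $\tr{(K,p)=(\mathbf{n},|\id|)}$. This goal is a proposition, so we may eliminate $p:\tr{K\simeq\mathbf{n}}$ and assume an equivalence $e:K\simeq\mathbf{n}$. Univalence turns $e$ into a path $K=\mathbf{n}$. Since the second components are propositions, this path lifts to a path in $B\Sigma_n$. Hence every element is merely equal to the centre, and the type is inhabited by the centre, so $B\Sigma_n$ is connected.

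\textbf{Connectedness of $X^n\hq\Sigma_n$.} Since $X$ is connected, it is merely inhabited; as the goal is a proposition, we may fix $x_0:X$. Take $(\mathbf{n},\lambda\_.\,x_0)$ as the centre. Given $((K,p),f)$, we may again assume $e:K\simeq\mathbf{n}$, and transport along the resulting path in $B\Sigma_n$. This reduces the problem to showing that every $f:\mathbf{n}\to X$ is merely equal to the constant function at $x_0$.

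By function extensionality, it suffices to show $\tr{\prod_{k:\mathbf{n}}f(k)=x_0}$. For each $k$, connectedness of $X$ gives $\tr{f(k)=x_0}$. The remaining task is to pull the truncation out of the finite product, i.e.\ to prove the finite choice principle
\[\Big(\prod_{k:\mathbf{n}}\tr{P(k)}\Big)\to\tr{\prod_{k:\mathbf{n}}P(k)}.\]
We prove this by induction on $n$, using the inductive definition $\mathbf{n+1}=\mathbf{n}\sqcup\mathbf{1}$. For $n=0$ the product is trivially inhabited. In the successor case, the product over $\mathbf{n}\sqcup\mathbf{1}$ splits as a product of the product over $\mathbf{n}$ and $P(*)$, and truncation commutes with binary products.

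The only mildly nontrivial step is this finite choice. It is a routine induction, so no real obstacle is expected.
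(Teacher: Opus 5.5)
The paper states this as a background fact without proof, so there is no argument to compare against; your proof is correct and is the standard one.

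It uses exactly the two ingredients you identify:
\begin{itemize}
\item univalence, together with propositionality of $\tr{K\simeq\mathbf{n}}$, which gives connectedness of $B\Sigma_n$;
\item finite choice over $\mathbf{n}$, proved by induction along $\mathbf{n}\sqcup\mathbf{1}$, which turns the pointwise mere paths $\tr{f(k)=x_0}$ into a mere path between labelling functions.
\end{itemize}

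You could also skip the explicit transport. Use the paper's description of a path in $X^n\hq\Sigma_n$ as an equivalence $K\simeq\mathbf{n}$ together with $\prod_{k:K}f(k)=x_0$. Then apply finite choice over $K$ after eliminating $\tr{K\simeq\mathbf{n}}$, which is allowed because the goal is a proposition. This is the same argument in different bookkeeping.
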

\section{The second symmetric product}
The results of this section come directly from \cite{upairs}.
\begin{definition}
    For any $X:\Type$, define $\SP^2X$ as the pushout:
    \begin{equation}
        \begin{tikzcd}
            B\Sigma_2\times X \arrow[r, "\delta"] \arrow[d, "\mathrm{proj}_X"] & X^2\hq \Sigma_2 \arrow[d, "\qtwo"]\\
            X \arrow[r, "\stwo"] & \SP^2X \arrow[ul, pos=0, phantom, "\ulcorner"]
        \end{tikzcd},\quad\delta(b,x)=(b,\lambda\_.x)
    \end{equation}
\end{definition}
\begin{theorem}[Buchholtz]
    \label{t:sp2set}
    If $X$ is a set, then $\SP^2X$ is also a set.
\end{theorem}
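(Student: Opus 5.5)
The plan is an encode--decode argument. Fix a set $X$. We want to show that for all $u,v:\SP^2X$ the type $u=v$ is a proposition. We define a type family $\mathrm{code}:\SP^2X\to\SP^2X\to\Prop$ by double pushout induction, together with a reflexivity term $r:\prod_u\mathrm{code}(u,u)$. Since $\Prop$ is a set, the glue coherences in the second variable reduce to checking equalities of propositions, i.e.\ logical equivalences. Once we have $\mathrm{encode}:u=v\to\mathrm{code}(u,v)$ (by transport of $r$) and $\mathrm{decode}:\mathrm{code}(u,v)\to u=v$ with $\mathrm{decode}\circ\mathrm{encode}=\id$, the type $u=v$ is a retract of a proposition. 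Hence it is a proposition, and $\SP^2X$ is a set.

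The codes are the naive equality relations on unordered pairs:
\begin{itemize}
\item On points: $\mathrm{code}(\stwo x,\stwo y):\equiv(x=y)$.
\item $\mathrm{code}(\qtwo(K,f),\qtwo(K',f'))$ is the proposition that there exists an equivalence $K\simeq K'$ commuting with the labels, i.e.\ the propositional truncation of the path type in $X^2\hq\Sigma_2$.
\item $\mathrm{code}(\stwo x,\qtwo(K,f)):\equiv\prod_{k:K}f(k)=x$, and symmetrically for the mixed case.
\end{itemize}
Because $X$ is a set, each of these is a proposition. The truncation in the second case is harmless: since $X$ is a set, two label-preserving equivalences between $2$-element types with distinct labels are equal, but when the labels coincide there are two of them. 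This is exactly why the truncated version is the right code, and why the glue identifies the diagonal.

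The glue conditions $\mathrm{code}(\qtwo(b,\lambda\_.x),-)\leftrightarrow\mathrm{code}(\stwo x,-)$ are then checked pointwise. On $\stwo y$ both sides say $x=y$, using that $b$ is merely inhabited. On $\qtwo(K',f')$ both sides say $f'$ is constant at $x$, using that any two $2$-element types are merely equivalent (connectedness of $B\Sigma_2$). Reflexivity $r$ is defined by induction on $u$; its glue case is automatic because codes are propositions.

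Decode is defined by pushout induction on $u$ and $v$, with the following cases:
\begin{itemize}
\item $(\stwo,\stwo)$: decode is $\ap_{\stwo}$.
\item $(\qtwo,\qtwo)$: since the target $\qtwo(K,f)=\qtwo(K',f')$ need not be a proposition a priori, we cannot eliminate the truncation directly. Instead we define the map on the untruncated path type by $\ap_\qtwo$ and show it is \emph{weakly constant}: the two label-preserving equivalences exist only when $f$ is constant, in which case both images factor through glue to $\stwo x$ and agree. A weakly constant map into any type factors through the truncation when, as here, we only need it after showing it is constant on all pairs.
\item Mixed cases: decode is built from the glue path $\glue(K,x)$ after transporting $f$ to $\lambda\_.x$.
\end{itemize}
It then remains to verify $\mathrm{decode}(\mathrm{encode}(p))=p$ by path induction on $p$, which reduces to $\mathrm{decode}(r(u))=\refl_u$. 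This is itself proven by induction on $u$, where the glue case asks for a coherence between $\glue$ and $\ap$ of the decodes.

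The main obstacle is the decode on the $(\qtwo,\qtwo)$ component and its coherence with glue. In particular, we must show that the nontrivial automorphism (the swap) of a constant pair $(K,\lambda\_.x)$ is sent by $\ap_\qtwo$ to the same loop as the identity. This requires that, in $\SP^2X$, the image of $\ap_{\qtwo}$ on $\Omega(X^2\hq\Sigma_2,(K,\lambda\_.x))$ be trivial. I would prove this by factoring $\qtwo\circ\delta$ through $\stwo\circ\proj_X$ via the glue. The loop space of $B\Sigma_2\times X$ at $(K,x)$ surjects onto the label-preserving loops at $(K,\lambda\_.x)$, so every such loop becomes conjugate by $\glue$ to $\ap_{\stwo}(\refl_x)=\refl$. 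The remaining bookkeeping consists of the dependent identifications over glue in the double induction.
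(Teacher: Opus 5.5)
Your codes are the right ones (each turns out to be equivalent to the corresponding identity type), and the overall reduction is sound. You do not even need $\mathrm{decode}\circ\mathrm{encode}=\id$: a reflexive, proposition-valued relation that implies identity already forces the identity types to be propositions (Theorem 7.2.2 of the HoTT book). The genuine gap is the $(\qtwo,\qtwo)$ case of $\mathrm{decode}$. The principle you invoke, that a weakly constant map factors through the propositional truncation, is available only when the codomain is a set. For a general codomain $B$, a map $\tr{A}\to B$ corresponds to a \emph{coherently} constant map $A\to B$: weak constancy, then a cocycle condition on triples, then further coherences at every level. This tower only stops if $B$ is known to be truncated. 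Here the codomain is $\qtwo(K,f)=\qtwo(K',f')$, a path type of $\SP^2X$. Since $\SP^2X$ is a pushout of $1$-types, it has no a priori truncation bound, and knowing that these path types are sets is essentially what you are proving. Your computation that $\ap_{\qtwo}$ kills every loop at $(K,\lambda\_.x)$ supplies only the first level of that data. In hindsight the factorization exists, because the codomain turns out to be a proposition, but your argument does not produce it. The same problem affects what you call bookkeeping. The glue cases of the double induction for $\mathrm{decode}$, and of $\mathrm{decode}(r(u))=\refl_u$, are identifications between $2$-paths and $3$-paths in $\SP^2X$, and they are not automatic while $\SP^2X$ is not known to be a $1$-type.

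The missing idea is to use a code that is both a proposition \emph{and} a pushout, so that maps out of it need no tower of coherences. The paper first reduces, via connectedness of $B\Sigma_2$ and the fact that contractibility is a proposition, to a based code at a model point $z_0=\qtwo(w_0)$ with $w_0=(\mathbf{2},(x_0,x_1))$. It then replaces your $\tr{w_0=(K,f)}$ by the join $(w_0=(K,f))*(P\times(f=\lambda\_.x_0))$ with $P:\equiv x_0=x_1$. This join is a proposition for exactly the reason you identified: two label-preserving equivalences differing by the swap force $P$, and then the right-hand side contracts the join. So it is equivalent to your truncated code. But eliminating out of it requires only the two legs and one family of glue identifications. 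The right leg is where the glue of $\SP^2X$ enters, giving the null-homotopy on the diagonal uniformly, without deciding whether $x_0=x_1$. The paper then bypasses $\mathrm{decode}$ entirely. It proves $\sum_{z:\SP^2X}Q(z)$ contractible with the flattening lemma, applies the fundamental theorem of identity types, and finishes by showing that $Q(z_0)$ is contractible.
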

We recall the proof to motivate our approach to the $\SP^3$ case.
\begin{proof}
    It suffices to show that loop spaces in $\SP^2X$ are contractible for a model point $z_0$ such that $z_0=\qtwo(w_0)$ and $w_0=(\mathbf{2},(x_0,x_1))$ for $x_0,x_1: X$. Indeed, by a pushout induction, for each $x:\SP^2X$ there merely exists a path between $x$ and a model point, and since $\Contr$ is a proposition, we can then prove that the loop space based at $x$ is contractible.

    We use an encode-decode strategy through a family $Q:\SP^2X\to\Type$, with the goal of proving that $Q(z)$ is the type $z_0=z$, and then proving contractibility of $Q(z_0)$. We set $P:\equiv x_0=x_1$ and:
    \begin{equation}
        \begin{cases}
            Q(\qtwo(K,f))=(w_0=(K,f))*(P\times(f=\lambda\_.x_0)),\\
            Q(\stwo(x))=P\times (x=x_0)
        \end{cases}
    \end{equation}
    To finish this definition, we need to define the glue case, $\ap_Q(\glue(K,x)):Q(\qtwo(K,\lambda\_.x))=_\Type Q(\stwo(x))$ for $K:B\Sigma_2$ and $x:X$. We will prove that both sides of the equality are propositions. For the right side it is easy, for the left side we assume it has an element and prove it is contractible. If $w_0=(K,\lambda\_.x)$, then $x_0=x_1=x$, so any inhabitant of the join implies $P\times(\lambda\_.x=\lambda\_.x_0)$, which contracts the join. 

    We will now prove that $D:\equiv \sum_{z:\SP^2(X)}Q(z)$ is contractible. By the fundamental theorem of identity types (5.8.2 in \cite{book}), this will give us the desired relation $Q(z)\simeq(z_0=z)$. By the flattening lemma (6.12.2 in \cite{book}), we get a pushout diagram:
    \begin{equation}
        \pushout{\sum_{v:B\Sigma_2\times X}P\times (\proj_2v=x_0)}{\sum_{w:X^2\hq \Sigma_2}Q(\qtwo(w))}{\sum_{x:X}P\times (x=x_0)}{D}
    \end{equation}
    Now we can use the fact that $\sum_{s:S}(s_0=s)$ is contractible for any $S:\Type,s_0:S$ (lemma 3.11.8 in \cite{book}) and commutativity of sigma-types to simplify the corners of the pushout. We get the diagram:
    \begin{equation}
    \label{eq:pushoutD}
        \begin{tikzcd}
            B\Sigma_2\times P \arrow[r, "\proj_2"] \arrow[d] & P \arrow[d]\\
            \sum_{w:X^2\hq \Sigma_2}Q(\qtwo(w)) \arrow[r] & D \arrow[ul, pos=0, phantom, "\ulcorner"]
        \end{tikzcd}
    \end{equation}
    where the vertical map is $(K,p)\mapsto((K,\lambda\_.x_0),\inr(p,\refl))$. Denote the lower left corner as $R$, by commutativity of sigma-types and pushouts (and the same contractibility argument as above) we get: 
    \begin{equation}
    \label{eq:pushoutR}
        \pushout{P}{B\Sigma_2\times P}{\mathbf{1}}{R}
    \end{equation}
    where the horizontal map is $p\mapsto(\mathbf{2},p)$. 

    To finish the proof of contractibility of $D$, we show that $d=\inx_R(\inx_{\mathbf1}(*))$ for any $d:D$ by double induction on the pushout of \eqref{eq:pushoutD} and of \eqref{eq:pushoutR}.
    \begin{itemize}
        \item For the $p:P$ case, we have $\glue(\mathbf{2},p):\inx_Pp=\inx_R(\inx_{B\Sigma_2\times P}(\mathbf{2},p))$ and $\ap_{\inx_R}(\glue p):\linebreak\inx_R(\inx_{B\Sigma_2\times P}(\mathbf{2},p))=\allowbreak\inx_R(\inx_{\mathbf1}(*))$, composing the two paths gives the desired path.
        \item For the $R$ case, we proceed by a second layer of induction.
        \begin{itemize}
            \item For the $(K,p):B\Sigma_2\times P$ case, we have $\glue_D(K,p)^{-1}:\inx_R(\inx_{B\Sigma_2\times P}(K,p))=\inx_Pp$, $\glue_D(\mathbf{2},p):\inx_Pp=\inx_R(\inx_{B\Sigma_2\times P}(\mathbf{2},p))$ and $\ap_{\inx_{R}}(\glue_R p):\inx_R(\inx_{B\Sigma_2\times P}(\mathbf{2},p))=\linebreak\inx_R(\inx_{\mathbf1}(*))$, composing the three paths gives the desired paths.
            \item For the $\mathbf{1}$ case, the path is $\refl$.
            \item For the glue case, we have to prove an equality of form $(\ap_f(r_2))^{-1}=r_1^{-1}\cdot r_1\cdot\ap_f(r_2^{-1})$, which is true by path induction.
        \end{itemize}
        \item The glue case follows by commutativity of action on paths and the path inverse.
    \end{itemize}

    Now all that is left is to prove that $Q(z_0)$ is contractible. Since the right side is a proposition, it suffices to prove that $\inl\refl_{w_0}=_{Q(z_0)}\inl p$ for any $p:w_0=w_0$. Now we can treat an inhabitant of $w_0=w_0$ as an equivalence $\mathbf{2}\simeq \mathbf{2}$ commuting with the labelling functions. If this equivalence is the identity, $p$ is exactly $\refl_{w_0}$. If it is the unique non-identity equivalence (the swap), by the commutation relation we get $x_0=x_1$, which inhabits $P$ and contracts the join.
\end{proof}
\section{The third symmetric product}
This definition of the third symmetric product has been adapted from \cite{upairs}.
\begin{definition}
\label{d:SP3}
For any $X:\Type$, we define $\SP^3X$ as the following iterated pushout:
\begin{equation}
    \begin{tikzcd}
        B\Sigma_2\times X^2 \arrow[r, "\Shor"] \arrow[d, "\mathrm{proj}_{X^2}"] & X^3\hq \Sigma_3 \arrow[d, "\Sinl"] &
        X\times (B\Sigma_3\hq B\Sigma_2)\arrow[r,"\SPhor"] \arrow[d, "\mathrm{proj}_X"] & S^3_2(X) \arrow[d, "\SPinl"]
        \\ X^2 \arrow[r, "\Sinr"] &  \arrow[ul, pos=0, phantom, "\ulcorner"] S^3_2(X) & 
        X \arrow[r, "\SPinr"] & \SP^3(X) \arrow[ul, pos=0, phantom, "\ulcorner"]
    \end{tikzcd},
\end{equation}
where $B\Sigma_3\hq B\Sigma_2$ is the mapping cone of the inclusion $K\mapsto K\sqcup\mathbf{1}$, $\Shor$ is the map $(K,x,y)\mapsto(K\sqcup\mathbf{1},(\lambda\_.x)\sqcup(\lambda\_.y))$, and $\SPhor$ is a map defined by:
\begin{equation}
    \begin{cases}
        \SPhor(x,\inx t)&=\Sinl(t,\lambda\_.x)\\
        \SPhor(x,*)&=\Sinr(x,x)\\
        \ap_{\SPhor(x,\cdot)}(\glue_{B\Sigma_3\hq B\Sigma_2} K) &=\ap_\Sinl(\theta_K^x)\cdot\glue_{S^3_2(X)}(K,(x,x))
    \end{cases}
\end{equation}
with $\theta_K^x:(K\sqcup\mathbf1,\lambda\_.x)=(K\sqcup\mathbf1,\lambda\_.x\sqcup\lambda\_.x)$ being the natural identification.
\end{definition}
\begin{theorem}
    \label{t:sp3set}
    If $X$ is a set, then $\SP^3X$ is also a set.
\end{theorem}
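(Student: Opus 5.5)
We would follow the encode--decode pattern of the proof of Theorem \ref{t:sp2set}, run twice: once for the inner pushout $S^3_2(X)$ and once for the outer pushout $\SP^3(X)$. Being a set is a proposition, and by pushout induction (twice) every point of $\SP^3X$ is merely equal to a point $\SPinl(\Sinl(w))$ with $w=(K,f):X^3\hq\Sigma_3$. Since $K$ is merely $\mathbf{3}$, it suffices to show that the loop space at $z_0:\equiv\SPinl(\Sinl(w_0))$ is contractible, where $w_0=(\mathbf{3},(x_0,x_1,x_2))$ for arbitrary $x_0,x_1,x_2:X$.

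First we would define a family $Q:\SP^3X\to\Type$ by nested pushout recursion. Let $P_{ij}:\equiv x_i=x_j$, all propositions because $X$ is a set. On $\Sinl(K,f)$ we set
\[
Q(\SPinl(\Sinl(K,f))) :\equiv (w_0=(K,f)) * C(K,f),
\]
where $C(K,f)$ is the proposition ``$w_0$ has a repeated value and $(K,f)$ is in the image of $\Shor$ up to the same multiset.'' Concretely, $C(K,f)$ says there merely exist $i\neq j$ with $P_{ij}$ and $f$ has the same multiset of values as $w_0$, which is expressible as the truncation of an equivalence commuting with labels after identifying the repeated points. On $\Sinr(x,y)$ we would take a proposition $R(x,y)$ saying that $\{x,x,y\}$ equals $\{x_0,x_1,x_2\}$ as multisets. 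On $\SPinr(x)$ we would take $P_{01}\times P_{12}\times(x=x_0)$.

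The glue cases would follow the $\SP^2$ argument: both sides are propositions, and each side implies the other. The key point is that when $w_0=(K\sqcup\mathbf1,\lambda\_.x\sqcup\lambda\_.y)$, the join is contracted by its right component. The outer glue over $B\Sigma_3\hq B\Sigma_2$ needs a coherence along $\glue K$, but because the fibres are propositions, the required identification of equivalences between propositions is automatic.

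Second, we would show that $D:\equiv\sum_{z}Q(z)$ is contractible, using the flattening lemma twice together with contractibility of based path spaces to simplify the corners, exactly as in \eqref{eq:pushoutD} and \eqref{eq:pushoutR}. The $X^3\hq\Sigma_3$ corner becomes a pushout of $C$-data over $\mathbf1$. The remaining corners reduce to types like $B\Sigma_2\times P_{ij}$, $B\Sigma_3\hq B\Sigma_2\times P_{01}\times P_{12}$, or to propositions. We then contract everything to the centre $\inx(\refl_{w_0})$ by nested induction, with the glue cases handled by path algebra as before.

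Finally, we show that $Q(z_0)$ is contractible. This reduces to: every $p:w_0=w_0$, i.e.\ every permutation $\sigma$ of $\mathbf3$ commuting with the labels, satisfies either $p=\refl$ or $C(w_0)$. If $\sigma=\id$ we are done; otherwise $\sigma$ moves some $i$ to some $j\neq i$, giving $x_i=x_j$ and hence inhabiting $C(w_0)$. Since $\mathbf 3$ has decidable equality, this case split is constructive.

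We expect the main obstacle to be the choice of $Q$ on $S^3_2(X)$ and on the $\glue$ constructors. The right-hand propositions must simultaneously make all glue cases equivalences and make the flattened total space contractible. The second requirement is delicate, because the corner for $B\Sigma_2\times X^2$ is not a proposition. It will contribute summands like $B\Sigma_2\times P_{ij}$, and these must be collapsed by the $\glue$ of $B\Sigma_3\hq B\Sigma_2$ in the same way $\glue_R$ collapsed $B\Sigma_2\times P$ in \eqref{eq:pushoutR}. The first thing to try is to set things up so that the image of $\Shor$ is exactly the locus where $w_0$ has a repetition, and to check the case $x_0=x_1=x_2$ separately, since that is where the outer pushout is needed.
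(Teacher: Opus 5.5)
Your reduction to the model point $z_0$ is the same as the paper's, and so are the glue arguments and the final check that $Q(z_0)$ is contractible. Your family $Q$ also agrees with the paper's up to equivalence: $C(u)$ and $R(x,y)$ are logically equivalent to $J(u)$ and $E(\Sinr(x,y))$, i.e.\ to $\cb(u)$ and $\colAt(x,y)$.

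The gap is the contractibility of $D$. This is the heart of the proof, and you dispose of it with ``nested induction \dots\ path algebra as before''. That cannot work as in the $\SP^2$ case. After flattening, the corner $\sum_u C(u)$ is neither a proposition nor of the form $B\Sigma_2\times P_{ij}$. It is $\ColR\times\sum_u\tr{w_0=u}$, a $1$-type that is $B\Sigma_2$ when exactly two of the $x_i$ agree and $B\Sigma_3$ on the diagonal. To give a path from the centre to $\inx(u,c)$ with $c:C(u)$, you must extract from the truncated $c$ either a path $w_0=u$ or some $K$ with $\Shor(K,x,y)=u$. You cannot eliminate a truncation into a path type of $D$, since you do not yet know $D$'s path types are propositions. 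Moreover, the choice must be compatible with the glue from $B\Sigma_2\times\rhobar$, i.e.\ well defined on the fibres of $\Shor(\cdot,x,y)$.

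Your heuristic for what collapses the $B\Sigma_2$-summands is also wrong off the diagonal. When $P$ fails, the outer corner $(B\Sigma_3\hq B\Sigma_2)\times P$ is empty and collapses nothing. In the collision case, the $B\Sigma_2$ coming from the $\Shor$-glue must cancel inside $\LL$ against the component $\sum_u\tr{w_0=u}\simeq B\Sigma_2$, so that $\LL$ is the cofibre of $K\mapsto\Shor(K,x,y)$. What is needed is therefore not that the image of $\Shor$ is the repetition locus. It is that $\Shor(\cdot,x,y)$ has propositional fibres away from $x=y$. Since $x=y$ is undecidable, the paper packages this as the proposition $\WW(u)$ of Lemma \ref{l:Wprop}, proved by the automorphism analysis of Lemma \ref{l:coleq}.

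The paper then avoids path algebra altogether:
\begin{itemize}
\item Lemma \ref{l:PtoD} shows that $P$ makes $D$ contractible. This uses that the comparison map $B\Sigma_3\hq B\Sigma_2\to\LL\simeq B\Sigma_3\hq B\Sigma_2$ is the identity, which depends on the $\theta^x_K$ in the definition of $\SPhor$.
\item Lemma \ref{l:colToD} shows that a collision $w_0=\col(x,y)$ makes $D$ contractible.
\item In Lemma \ref{l:Dcontr}, every non-central case of the nested induction carries one of these witnesses.
\end{itemize}
Because $\Contr(D)$ is a proposition, the truncations can then be eliminated, and all required paths and glue coherences come for free. None of these ingredients appears in your proposal, so the central step of your argument is missing.
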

Once again, it suffices to show that loop spaces in $\SP^3X$ are contractible for a model point $z_0$, this time we take $z_0:\equiv\SPinl(y_0)$, $y_0:\equiv\Sinl(w_0)$ and $w_0:\equiv(\mathbf{3},(x_0,x_1,x_2))$ for some $x_0,x_1,x_2$. Indeed, by a double pushout induction, for each $x: \SP^3X$ there merely exists a path between $x$ and a model point, and since $\Contr$ is a proposition, we can then prove that the loop space based at $x$ is contractible. For the following lemmas, we fix a set $X$, elements $x_0,x_1,x_2:X$, and use the notations of $z_0,y_0,w_0$. 

For $\SP^2$, we had two cases to handle: the diagonal and the off-diagonal. As seen in the definition of $\SP^3$, the additional case we will have here is a collision case, also known as the fat diagonal, where one coordinate appears twice. Apart from the diagonal proposition $P$, we define additional propositions for identifying the collision case.
\begin{definition}
    Let $\col:X^2\to X^3\hq \Sigma_3$ be defined as $\col(x,y):\equiv\Shor(\mathbf{2},x,y)$. Then we define the proposition $P:\Prop$ and collision propositions $\colAt:X^2\to\Prop$, $\cb:X^3\hq \Sigma_3\to\Prop$ as:
    \begin{align}
        P&:\equiv(x_0=x_1)\times(x_1=x_2),\\
        \colAt(a,b)&:\equiv\tr{w_0=\col(a,b)},\\
        \cb(u)&:\equiv\tr{\sum_{a,b:X}(w_0=\col(a,b))\times(u=\col(a,b))}
    \end{align}
    We read $\cb(u)$ as ``$u$ and $w_0$ are in the same collision case''. 
\end{definition}
\begin{lemma}
    \label{l:coleq}
    If $\col(a,b)=\col(a',b')$, then $a=a'$ and $b=b'$. 
\end{lemma}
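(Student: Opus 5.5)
We unfold what a path $\col(a,b)=\col(a',b')$ in $X^3\hq\Sigma_3$ is. By definition $\col(a,b)\equiv(\mathbf{2}\sqcup\mathbf{1},(\lambda\_.a)\sqcup(\lambda\_.b))$, i.e.\ the labelling takes value $a$ on the two points of $\mathbf{2}$ and $b$ on the extra point $*$. A path between $\col(a,b)$ and $\col(a',b')$ is an equivalence $e:\mathbf{3}\simeq\mathbf{3}$ (with $\mathbf{3}\equiv\mathbf{2}\sqcup\mathbf{1}$) together with $\prod_{k:\mathbf3}f(k)=f'(e(k))$, where $f,f'$ are the two labellings. Since $X$ is a set, the goal $(a=a')\times(b=b')$ is a proposition. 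The equivalence $e$ is concrete data rather than something merely given, but this lets us argue by case analysis on it freely.

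The key step is a case split on $e(*)$, which lies in $\mathbf3$ and can be decided by induction on the finite type. If $e(*)=*$, then the commutation at $*$ gives $b=b'$. Moreover $e$ restricts to a bijection of $\mathbf2$, so for $k:=\inl(0)$ the point $e(k)$ is in $\mathbf2$, and commutation at $k$ gives $a=a'$. If instead $e(*)=\inl(i)$ for some $i:\mathbf2$, then commutation at $*$ gives $b=a'$. By injectivity of $e$, the two points $\inl(0),\inl(1)$ cannot both be sent into $\mathbf2\setminus\{i\}$, since that is a single point. So one of them, say $k$, satisfies $e(k)=*$, and commutation there gives $a=b'$. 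The remaining point $k'$ of $\mathbf2$ is sent to $\inl(1-i)$, which gives $a=a'$. Chaining these, $b=a'=a=b'$, so we get both $a=a'$ and $b=b'$ in this case too.

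The main obstacle is the bookkeeping in the second case: producing, constructively, the point $k$ with $e(k)=*$. The plan is to use $e^{-1}(*)$ directly. Since $e$ is injective and $e(*)\neq *$, we have $e^{-1}(*)\neq *$, so $e^{-1}(*)=\inl(j)$ for some $j$. Then the other point $\inl(1-j)$ maps neither to $*$ nor to $e(*)$, hence maps to the remaining element $\inl(1-i)$. All of these facts are decidable equalities in $\mathbf3$, which makes the case analysis routine.
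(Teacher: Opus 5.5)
Your proof is correct and follows essentially the same route as the paper's. Both case-split on $e(*)$ and read off the same commutation equations: $b=b'$ and $a=a'$ when $e(*)=*$; otherwise $b=a'$, $a=b'$ at the preimage of $*$, and $a=a'$ at the remaining point. You also spell out two things the paper leaves implicit: the chain $b=a'=a=b'$, and the construction of the preimage of $*$ as $e^{-1}(*)$. The appeal to $X$ being a set is harmless but not actually needed, since the path is given untruncated and $\mathbf{3}$ has decidable equality.
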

\begin{proof}
    The path $\col(a,b)=\col(a',b')$ in $X^3\hq \Sigma_3$ is an equivalence $e:\mathbf{2}\sqcup\mathbf{1}\simeq\mathbf{2}\sqcup\mathbf{1}$ commuting with the labelling functions. If $e(\inr*)=\inr*$, then $e$ sends $\inl\cdot$ to $\inl\cdot$, and by its commutation we get $b=b'$ and $a=a'$. If $e(\inr*)=\inl y$ for some $y:\mathbf{2}$, we get $b=a'$. However, there exists an $x:\mathbf{2}$ such that $e(\inl x)=\inr*$, which gives $a=b'$, and $e(\inl(\neg x))=\inl(\neg y)$, which gives $a=a'$. 
\end{proof}
\begin{corollary}
\label{c:ColR}
The type $\ColR:\equiv\sum_{a,b:X}\colAt(a,b)$ is a proposition. Furthermore, given a fixed $(a,b,c):\ColR$ and for all $u:X^3\hq \Sigma_3$, we have:
\begin{equation}
    \label{eq:w0col}
    \tr{u=\col(a,b)}\leftrightarrow\tr{w_0=u}
\end{equation}
Therefore:
\begin{equation}
    \left(\sum_{u:X^3\hq \Sigma_3}\cb(u)\right)\simeq\left(\ColR\times\sum_{u:X^3\hq \Sigma_3}\tr{w_0=u}\right)
\end{equation}
\end{corollary}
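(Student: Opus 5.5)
We prove the three claims in the order stated, with Lemma \ref{l:coleq} doing the main work. Throughout we use that $X$ is a set, so each $X^n\hq\Sigma_n$ is a $1$-type. It is not a set, so $w_0=\col(a,b)$ need not be a proposition. This is why $\colAt$ and $\cb$ are truncated, and every step below must target propositions so that truncations can be eliminated.

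\emph{$\ColR$ is a proposition.} Each $\colAt(a,b)$ is a proposition, so it suffices to show $a=a'$ and $b=b'$ whenever $\colAt(a,b)$ and $\colAt(a',b')$ hold. Since $X$ is a set, $(a=a')\times(b=b')$ is a proposition, so we may eliminate both truncations. This gives paths $w_0=\col(a,b)$ and $w_0=\col(a',b')$, hence $\col(a,b)=\col(a',b')$. Lemma \ref{l:coleq} then yields $a=a'$ and $b=b'$. The pair-equality in $\sum_{a,b}\colAt(a,b)$ follows, since the third component lives in a proposition.

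\emph{The equivalence \eqref{eq:w0col}.} Fix $(a,b,c):\ColR$. Both sides of \eqref{eq:w0col} are propositions, so we may eliminate $c$ to a concrete path $r:w_0=\col(a,b)$. Given $\tr{u=\col(a,b)}$, we eliminate to $s:u=\col(a,b)$ and return $|r\cdot s^{-1}|$. Conversely, from $t:w_0=u$ we return $|t^{-1}\cdot r|$.

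\emph{The final equivalence.} Both sides are $\Sigma$-types over $X^3\hq\Sigma_3$, up to reassociating: the right side is $\sum_u \ColR\times\tr{w_0=u}$. It therefore suffices to give a fibrewise logical equivalence $\cb(u)\leftrightarrow\ColR\times\tr{w_0=u}$. Both fibres are propositions, the right one because $\ColR$ is a proposition by the first part. A fibrewise logical equivalence of propositions induces a fibrewise equivalence, hence an equivalence of total spaces.
\begin{itemize}
\item[$(\Rightarrow)$] Eliminate $\cb(u)$ to $(a,b,p,q)$ with $p:w_0=\col(a,b)$ and $q:u=\col(a,b)$. Then $(a,b,|p|):\ColR$, and $|p\cdot q^{-1}|:\tr{w_0=u}$.
\item[$(\Leftarrow)$] Given $(a,b,c)$ and $\tr{w_0=u}$, the second direction of \eqref{eq:w0col} gives $\tr{u=\col(a,b)}$. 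Eliminating this together with $c$, which is allowed since the target $\cb(u)$ is a proposition, gives paths that we package as $|(a,b,r,s)|$.
\end{itemize}

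The mathematical content is entirely in Lemma \ref{l:coleq}; everything else is bookkeeping with truncations. The only delicate point is to eliminate truncations only into propositions. This is satisfied at every step: identity types in $X$, the truncations themselves, and $\cb(u)$ are all propositions.
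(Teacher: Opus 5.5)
Your proposal is correct and follows the paper's argument: you compose the truncated paths, apply Lemma \ref{l:coleq} under the truncation (legitimate because $X$ is a set), and obtain \eqref{eq:w0col} by composing with $c$. Your only addition is to spell out the final ``therefore'' as a fibrewise logical equivalence of propositions over $X^3\hq\Sigma_3$; the paper leaves this step implicit but clearly intends it.
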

The proposition $\ColR$ is the analogue of $P$ for the collision case.  
\begin{proof}
    Let $(a,b,c),(a',b',c'):\ColR$. Then composing $\tr{w_0=\col(a,b)}$ and $\tr{w_0=\col(a',b')}$ we get $\lVert\col(a,b)=\allowbreak \col(a',b')\rVert$, which by lemma \ref{l:coleq} gives us $(a=a')\times (b=b')$. Then $(a,b,c)=(a',b',c')$ follows. For $(a,b,c):\ColR$, both directions of the equivalence \eqref{eq:w0col} follow from composing with $c:\tr{w_0=\col(a,b)}$.
\end{proof}
We now employ the same encode-decode strategy as with $Q$ for $\SP^2X$. However, since $\SP^3X$ is defined by a double pushout, we will introduce two type families, one defined on the intermediate step, and the other defined on the actual space. Once again, our intention is to make $Q(z)$ encode the paths $z_0=z$. 
\begin{definition}
    \label{d:E}
    Define $J:X^3\hq \Sigma_3\to\Prop$ as $J(u):\equiv\cb(u)*(P\times (\proj_2u=\lambda\_.x_0))$. We can define an $E:S^3_2(X)\to\Type$ that has:
    \begin{equation}
        \begin{cases}
            E(\Sinl(u)) &:\equiv (w_0=u)*J(u),\\
            E(\Sinr(a,b)) &:\equiv \colAt(a,b)*(P\times a=x_0\times b=x_0).
        \end{cases}
    \end{equation}
\end{definition}
\begin{proof}
     To finish the definition, we need to prove that $E(\Sinl(\Shor(K,x,y)))=E(\Sinr(x,y))$. The right side is a proposition as a join of propositions, we will prove that so is the left. For an inhabitant $p:E(\Sinl(\Shor(K,x,y)))$, we get an inhabitant of $J(\Shor(K,x,y))$ in the right and glue case, and a path $w_0=\Shor(K,x,y)$ in the left case, which by connectedness of $B\Sigma_2$ gives us $\cb(\Shor(K,x,y))$ and $J(\Shor(K,x,y))$. Thus $J(\Shor(K,x,y))$ is then contractible, and so is $E(\Sinl(\Shor(K,x,y)))$. Now to prove $E(\Sinl(\Shor(K,x,y)))\leftrightarrow E(\Sinr(x,y))$ we need four functions on legs of the pushouts, all easy to define. 
\end{proof}
\begin{definition}
    We can define a $Q:\SP^3X\to\Type$ that has:
    \begin{equation}
        \begin{cases}
            Q(\SPinl(v))&:\equiv E(v),\\
            Q(\SPinr(a))&:\equiv P\times (a=x_0).
        \end{cases}
    \end{equation}
    and let $D:\equiv\sum_{z:\SP^3(X)}Q(z)$.
\end{definition}
\begin{proof}
    To finish the definition, we need to define the glue case for $Q$. For $(a,c):X\times B\Sigma_3\hq B\Sigma_2$, $Q(\SPinl(\SPhor(a,c)))$ and $Q(\SPinr(a))$ are both propositions, the first one by mapping cone induction on $c$. Now we need functions $E(\SPhor(a,c))\leftrightarrow P\times (a=x_0)$. By mapping cone induction on $c$ for both directions, it suffices to define $E(\SPhor(a,\inx K))\equiv (w_0=(K,\lambda\_.a))*J(K,\lambda\_.a)\leftrightarrow P\times (a=x_0)$ and $E(\SPhor(a,*))\equiv\colAt(a,a)*(P\times (a=x_0)^2)\leftrightarrow P\times (a=x_0)$, all cases are simple.
\end{proof}
Our main goal now will be proving that $D$ is contractible. To do that, we start by expressing it in terms of pushouts. By a flattening argument similar to the proof of theorem \ref{t:sp2set}, we have a pushout diagram:
    \begin{equation}
        \label{eq:stage1}
        \begin{tikzcd}
            (B\Sigma_3\hq B\Sigma_2)\times P \arrow[d] \arrow[r, "\proj_2"] & P \arrow[d] \\
            \LL \arrow[r] & D \arrow[ul, pos=0, phantom, "\ulcorner"]
        \end{tikzcd}
    \end{equation}
    where $\LL:\equiv\sum_{s:S^3_2(X)}Q(\SPinl(s))$. By a second flattening argument, using the equality proof of definition \ref{d:E} and commutativity of $\Sigma$-types for the base space, we have:
    \begin{equation}
        \label{eq:stage2}
        \begin{tikzcd}
            B\Sigma_2\times\rhobar \arrow[d] \arrow[r, "\proj_2"] & \rhobar \arrow[d] \\
            \Lam \arrow[r] & \LL \arrow[ul, pos=0, phantom, "\ulcorner"]
        \end{tikzcd}
    \end{equation}
    where $\rhobar:\equiv\sum_{a,b:X}Q(\SPinl(\Sinr(a,b)))$ and $\Lam:\equiv\sum_{u:X^3\hq \Sigma_3}((w_0=u)*J(u))$. Since pushouts commute with $\Sigma$-types, we can also write pushout diagrams:
    \begin{equation}
        \label{eq:stage3}
        \begin{tikzcd}
            \ColR\times P\arrow[r]\arrow[d] & P \arrow[d] & J(w_0) \arrow[r] \arrow[d] & \sum_{u:X^3\hq \Sigma_3}J(u) \arrow[d]\\
            \ColR \arrow[r] & \rhobar \arrow[ul, pos=0, phantom, "\ulcorner"] & \mathbf{1} \arrow[r] & \Lam \arrow[ul, pos=0, phantom, "\ulcorner"]
        \end{tikzcd}
    \end{equation}
    We can check $\left(\sum_{u:X^3\hq \Sigma_3}\proj_2u=\lambda\_.x_0\right)\simeq B\Sigma_3$ and $(P\times(\proj_2u=\lambda\_.x_0))\to\cb(u)$, so: 
    \begin{equation}
        \left(\sum_{u:X^3\hq \Sigma_3}\cb(u)\times P\times (\proj_2u=\lambda\_.x_0)\right)=\left(\sum_{u:X^3\hq \Sigma_3}P\times(\proj_2u=\lambda\_.x_0)\right)= P\times B\Sigma_3.
    \end{equation}
    Therefore, combining that with a final flattening argument, we get a pushout:
    \begin{equation}
    \label{eq:stage4}
        \begin{tikzcd}
            P\times B\Sigma_3 \arrow[r, "\id"] \arrow[d] & P\times B\Sigma_3 \arrow[d]\\
            \sum_{u:X^3\hq \Sigma_3}\cb(u)\arrow[r] & \sum_{u:X^3\hq \Sigma_3} J(u) \arrow[ul, pos=0, phantom, "\ulcorner"]
        \end{tikzcd}
    \end{equation}
    which, combined with corollary \ref{c:ColR}, gives us:
    \begin{equation}
    \label{eq:stage5}
        \sum_{u:X^3\hq \Sigma_3}\cb(u)\simeq\sum_{u:X^3\hq \Sigma_3} J(u)\simeq\ColR\times\sum_{u:X^3\hq \Sigma_3}\tr{w_0=u}.
    \end{equation}
To prove that $D$ is contractible, we have to take an inhabitant $d:D$ and show by induction on the pushouts above that there is a path between $d$ and the center of contraction. However in some cases, the inhabitant contains a witness to a case that makes $D$ automatically contractible, making the path trivial. In lemmas \ref{l:PtoD} and \ref{l:colToD}, we prove such contractions. The idea for the rest of the proof of contraction (in lemma \ref{l:Dcontr}) is that every case has such a witness or is trivial.
\begin{lemma}
    \label{l:PtoD}
    If $P$ holds, then $D$ is contractible.
\end{lemma}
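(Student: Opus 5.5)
Assuming $P$, we have $x_0=x_1=x_2$, so $w_0=(\mathbf{3},\lambda\_.x_0)=\col(x_0,x_0)$. The plan is to show that, under $P$, every type family in the construction collapses to a simple proposition. Then the pushout presentations \eqref{eq:stage1}--\eqref{eq:stage3} reduce to a pushout of the form $C\leftarrow C\to\mathbf{1}$ with the left map an equivalence, which is contractible. Since $P$ is a proposition, assuming it is harmless: we may replace $P$ by $\mathbf{1}$ everywhere, and $P\times A\simeq A$.

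First I simplify the fibres.
\begin{itemize}
\item \emph{The family $J$.} The text before \eqref{eq:stage4} shows $P\times(\proj_2u=\lambda\_.x_0)\to\cb(u)$. Conversely, $\cb(u)$ together with $w_0=\col(x_0,x_0)$ gives $\proj_2u=\lambda\_.x_0$ by Corollary \ref{c:ColR}. Under $P$ the two sides of the join $J(u)$ are therefore equivalent propositions, so $J(u)\simeq\cb(u)\simeq\tr{w_0=u}$.
\item \emph{The type $\Lam$.} Since $w_0=u$ implies $\tr{w_0=u}$, the join $(w_0=u)*J(u)$ collapses to $\tr{w_0=u}$. Hence $\Lam\simeq\sum_u\tr{w_0=u}$. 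Because $X$ is a set and all labels must equal $x_0$, this is $\sum_{K:B\Sigma_3}(\proj_1K\to\{x_0\})\simeq B\Sigma_3$.
\item \emph{The type $\rhobar$.} By the same reasoning, $\rhobar\simeq\sum_{a,b}(a=x_0)\times(b=x_0)\simeq\mathbf{1}$, because $\colAt(a,b)$ forces $a=b=x_0$ via Lemma \ref{l:coleq}.
\end{itemize}

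Next I substitute these into the stages. In \eqref{eq:stage2}, $\LL$ becomes the pushout of $B\Sigma_2\to B\Sigma_3$ and $B\Sigma_2\to\mathbf{1}$. For this I must check that the map $B\Sigma_2\times\rhobar\to\Lam$ is, under the identifications above, $K\mapsto K\sqcup\mathbf{1}$. It is induced by $\Shor(K,x_0,x_0)=(K\sqcup\mathbf{1},\lambda\_.x_0)$ up to $\theta$, so it should be. This identifies $\LL\simeq B\Sigma_3\hq B\Sigma_2$. Then \eqref{eq:stage1} becomes the pushout of the map $B\Sigma_3\hq B\Sigma_2\to\LL$ and the map to $\mathbf{1}$. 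If the first map is an equivalence, $D$ is the pushout of an equivalence and a map to $\mathbf{1}$, hence $D\simeq\mathbf{1}$.

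The main obstacle is this last identification. I need to verify that the vertical map of \eqref{eq:stage1}, given by $\SPhor(x_0,\cdot)$ on the first component, agrees with the equivalence $B\Sigma_3\hq B\Sigma_2\simeq\LL$ obtained above. This must hold not only on the point constructors $\inx K\mapsto\Sinl(K,\lambda\_.x_0)$ and $*\mapsto\Sinr(x_0,x_0)$, but also on glue, where the defining clause $\ap_\Sinl(\theta^{x_0}_K)\cdot\glue(K,(x_0,x_0))$ must match the glue of the mapping cone.

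I would handle it by mapping cone induction, comparing the two maps as functions into $\LL$. The point cases are definitional, given the fibre components, which live in propositions. For the glue case, the second components are again propositions, so only the base paths have to agree, and that is exactly the defining clause of $\SPhor$.

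If this coherence proves painful, I would fall back on a direct contraction. Take the centre $\inx_P(p)$ and contract each constructor of the nested pushouts to it. Every fibre over the relevant points is a proposition, and the only nontrivial case, the glue paths, reduces to path algebra of the kind used in the proof of Theorem \ref{t:sp2set}.
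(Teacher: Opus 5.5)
Your proposal is correct and follows essentially the same route as the paper. Under $P$ you collapse the fibres to get $\rhobar\simeq\mathbf{1}$ and $\Lam\simeq B\Sigma_3$, recognise $\LL$ as the mapping cone $B\Sigma_3\hq B\Sigma_2$, and show by mapping cone induction that the vertical map of \eqref{eq:stage1} is an equivalence, with the glue case reducing to the defining clause of $\SPhor$; the paper phrases the same point as $\theta^{x}_K$ being reflexivity on the first coordinate. The remaining differences are cosmetic: you simplify $\Lam$ fibrewise rather than through the right pushout of \eqref{eq:stage3}, and you compare two maps into $\LL$ rather than showing that the composite self-map of $B\Sigma_3\hq B\Sigma_2$ is the identity.
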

\begin{proof}
    By path induction, assume $x_1\equiv x_2\equiv x_0$. Then $w_0=\col(x_0,x_0)$, so $\cb(w_0)$, $J(w_0)$ \linebreak and $\ColR$ all hold. Now we can prove $\sum_{u:X^3\hq \Sigma_3}\tr{w_0=u}\simeq B\Sigma_3$ by univalence, for the proof of $(K,\lambda\_.x_0,p)=_{\sum_{u:X^3\hq \Sigma_3}\tr{w_0=u}}(K,f,p')$ we notice that $f=\lambda\_.x_0$ by induction on $p'$. Therefore $\sum_{u:X^3\hq \Sigma_3}J(u)\simeq B\Sigma_3$. Then the right pushout of \eqref{eq:stage3} becomes:
    \begin{equation}
        \pushout{\mathbf{1}}{B\Sigma_3}{\mathbf{1}}{\Lam},
    \end{equation}
    so $\Lam\simeq B\Sigma_3$, by an equivalence $((K,f),p)\mapsto K$. By the left pushout, $\rhobar$ is contractible, so the diagram \eqref{eq:stage2} becomes:
    \begin{equation}
        \pushout{B\Sigma_2}{\mathbf{1}}{B\Sigma_3}{\LL}
    \end{equation}
    where the map $B\Sigma_2\to B\Sigma_3$ is the inclusion $K\mapsto K\sqcup\mathbf{1}$. This is the pushout definition of a mapping cone, so $L\simeq B\Sigma_3\hq B\Sigma_2$. The equivalence sends the $\rhobar$ leg to $*$, a $((K,f),p)$ in the $\Lam$ leg to $\inx K$, and the pushout glue to the mapping cone glue. Finally, the diagram \eqref{eq:stage1} becomes:
    \begin{equation}
        \pushout{B\Sigma_3\hq B\Sigma_2}{\mathbf{1}}{B\Sigma_3\hq B\Sigma_2}{D}
    \end{equation}
    where the vertical map is equal to the identity by function extensionality and mapping cone induction, since $\theta^x_K$ from definition \ref{d:SP3} is reflexivity on the first coordinate. Thus $D$ is contractible.
\end{proof}
To prove the next case, we introduce a helper type $W(u)$ and use the fact that it is a proposition for any $u$.
\begin{lemma}
    \label{l:Wprop}
    Assume $w_0=\col(x,y)$. Then $\WW:X^3\hq \Sigma_3\to\Type$ defined as:
    \begin{equation}
        \WW(u):\equiv\left(\sum_{K:B\Sigma_2}\Shor(K,x,y)=u\right)*\left((x=y)\times (\proj_2u=\lambda\_.x)\right)
    \end{equation}
    is proposition-valued.
\end{lemma}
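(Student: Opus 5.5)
The plan is to show that $\WW(u)$ is contractible whenever it is inhabited; this suffices because being a proposition is equivalent to $\prod_{w:\WW(u)}\Contr(\WW(u))$. Write $A(u):\equiv\sum_{K:B\Sigma_2}\Shor(K,x,y)=u$ and $B(u):\equiv(x=y)\times(\proj_2u=\lambda\_.x)$. Since $X$ is a set, $x=y$ is a proposition. The type $\proj_2u=\lambda\_.x$ is a path type in a function type into a set, so it is also a proposition. Hence $B(u)$ is a proposition. Since $\Contr(\WW(u))$ is a proposition, we may do join induction on the given inhabitant. In the $\inr$ case, $B(u)$ is inhabited and hence contractible, so $\WW(u)$ is contractible by Fact \ref{f:joincontr}. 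The glue case is then automatic.

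It remains to handle an inhabitant $\inl(K,p)$ with $p:\Shor(K,x,y)=u$. By Fact \ref{f:joincontr} (with $B(u)$ a proposition), it is enough to produce, for every $(K',p'):A(u)$, a path $\inl(K,p)=\inl(K',p')$ in $\WW(u)$. The path $p\cdot p'^{-1}:\Shor(K,x,y)=\Shor(K',x,y)$ is, as in the proof of Lemma \ref{l:coleq}, an equivalence $e:K\sqcup\mathbf{1}\simeq K'\sqcup\mathbf{1}$ commuting with the labelling functions $(\lambda\_.x)\sqcup(\lambda\_.y)$. The goal is a path type in $\WW(u)$, but we are still inside the proof of a proposition ($\Contr$). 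We may therefore use the truncations $\tr{K\simeq\mathbf 2}$ and $\tr{K'\simeq\mathbf 2}$, or simply path-induct $K,K'$ to $\mathbf 2$ using connectedness of $B\Sigma_2$. This makes $e(\inr*)$ decidable, and we split into two cases.

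\emph{Case $e(\inr *)=\inr *$.} Then $e$ restricts to an equivalence $e':K\simeq K'$, giving a path $r:K=K'$ in $B\Sigma_2$ by univalence. We must check that $p$ transported along $r$ equals $p'$, i.e.\ that $\ap_{\Shor(\cdot,x,y)}(r)=p\cdot p'^{-1}$. A path in $X^3\hq\Sigma_3$ consists of an equivalence plus a homotopy of labellings. The homotopy component lives in paths of the set $X$ and is therefore unique. So it suffices to compare underlying equivalences, and $e'\sqcup\id_{\mathbf 1}=e$ holds by construction. This gives $(K,p)=(K',p')$ in $A(u)$, and hence the desired path between the $\inl$'s.

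\emph{Case $e(\inr *)=\inl k$.} Commutation with labels gives $y=x$. Then $u=\Shor(K,x,x)$ has labelling $(\lambda\_.x)\sqcup(\lambda\_.x)=\lambda\_.x$, so $\proj_2u=\lambda\_.x$ via $p$. Thus $B(u)$ is inhabited by some $b$. The path is then $\glue((K,p),b)\cdot\glue((K',p'),b)^{-1}$.

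The hypothesis $w_0=\col(x,y)$ does not seem to be needed for this argument; I expect it is included only for the later use of $\WW$. The main obstacle I expect is the first case: making precise that equality in $X^3\hq\Sigma_3$ is determined by the underlying equivalence when $X$ is a set. One also has to correctly identify the transport of $p$ along $r$ with $\ap_{\Shor(\cdot,x,y)}(r)$, and this latter map with $e'\sqcup\id$ under univalence. I would handle this by first stating a small lemma that the map $\Shor(\cdot,x,y)$ acts on paths as $e'\mapsto e'\sqcup\id$ on underlying equivalences. I would then reduce the fiber comparison to an equality of equivalences, which is decided pointwise on $K\sqcup\mathbf 1$.
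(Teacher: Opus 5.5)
Your proposal is correct and follows essentially the same route as the paper. The paper also shows contractibility from an inhabitant by join induction, then uses Fact~\ref{f:joincontr} to reduce to comparing $\inl(K,p)$ with an arbitrary $\inl(K',p')$. It then splits on whether the underlying equivalence $e$ sends $\inr *$ to $\inr *$ or to some $\inl k$. In the first case one restricts to a path $K=K'$ and uses that $X$ is a set to match the path components, which you spell out more carefully than the paper. In the second case one gets $x=y$ and passes through the right-hand side of the join. You are also right that the hypothesis $w_0=\col(x,y)$ is never used.

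One correction: drop the detour through $\tr{K\simeq\mathbf 2}$ and $\tr{K'\simeq\mathbf 2}$. It is unnecessary, because the split is just coproduct elimination on $e(\inr *):K'\sqcup\mathbf 1$, and the restriction only needs injectivity of $e$. It is also not justified as stated. At that point the goal $\inl(K,p)=\inl(K',p')$ is a path type in $\WW(u)$, which is not yet known to be a proposition. Moreover, $K'$ is quantified after the center has been fixed, so you cannot eliminate its truncation into that goal.
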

\begin{proof}
    Assume for $u:X^3\hq \Sigma_3$ we have a $w:\WW(u)$, we prove $\WW(u)$ is contractible by induction on $w$, omitting the glue case. The right side of the join is a proposition, so if it is inhabited, the whole join is contractible. Now say we have a $(K,p):\sum_{K:B\Sigma_2}\Shor(K,x,y)=u$, by induction on $p$ we can assume $\Shor(K,x,y)\equiv u$. We set the center of contraction to $\inl(K,\refl)$, and we just need to find:
    \begin{equation}
        \prod_{(K',p'):\sum_{K':B\Sigma_2}\Shor(K',x,y)=\Shor(K,x,y)}\inl(K',p')=_{\WW(u)}\inl(K,\refl)
    \end{equation}
    Now $p'$ can be seen as an equivalence $e:K'\sqcup\mathbf{1}\simeq K\sqcup\mathbf{1}$ commuting with labelling functions. If $e(\inr*)=\inr*$, then $e$ restricts to a path $q:K'=K$ such that $s:\ap_{\Shor(\cdot,x,y)}(q)=p'$. We get the claim by generalizing over $q$ and $s$ and performing path induction on them. If $e(\inr *)=\inl k$ for some $k:K$, by the commutation relations, similarly to the proof of lemma \ref{l:coleq}, we get $x=y$. Then the right side of the join holds, so the join is contractible by fact \ref{f:joincontr} and we have a path.
\end{proof}
\begin{lemma}
    \label{l:colToD}
    Assume $w_0=\col(x,y)$. Then $D$ is contractible.
\end{lemma}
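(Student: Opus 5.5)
The plan mirrors the proof of lemma \ref{l:PtoD}: under the hypothesis $w_0=\col(x,y)$, identify each corner of the diagrams \eqref{eq:stage1}--\eqref{eq:stage3} with something explicit, and read off contractibility of $D$. A simplification is used throughout. Since $X$ is a set and $w_0=\col(x,y)$ forces $x_0=x,x_1=x,x_2=y$ (up to the labelling), we get $P\leftrightarrow(x=y)$. Whenever a branch of the argument produces an inhabitant of $P$, lemma \ref{l:PtoD} finishes it outright.

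\textbf{Step 1 (the corner $\rhobar$).} The hypothesis gives $(x,y,|p|):\ColR$. By corollary \ref{c:ColR}, $\ColR$ is a proposition, hence contractible. The left pushout of \eqref{eq:stage3} is then a pushout of $\ColR\times P\to P$ along the projection to the contractible $\ColR$, so $\rhobar$ is contractible; equivalently $\rhobar\simeq\ColR*P$, and a join with a contractible type is contractible by fact \ref{f:joincontr}.

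\textbf{Step 2 (the corner $\Lam$, the key step).} I would show $\Lam\simeq\sum_{u}\WW(u)$ using lemma \ref{l:Wprop}, via a fibrewise equivalence $(w_0=u)*J(u)\simeq\WW(u)$.
\begin{enumerate}[(a)]
\item The left side is proposition-valued. An inhabitant either gives $w_0=u$, and hence $\cb(u)$; or it gives $J(u)$ directly. In both cases $J(u)$ is inhabited, and $J(u)$ is a proposition (a join of propositions), so it is contractible. Hence the whole join is contractible, as in the proof of definition \ref{d:E}.
\item So it suffices to give maps in both directions on the legs of the joins.
\item From left to right: a path $w_0=u$ is sent to $\inl(\mathbf{2},p^{-1}\cdot\ldots)$. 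Next, $\cb(u)$ gives merely $a,b$ with $w_0=\col(a,b)$ and $u=\col(a,b)$. Lemma \ref{l:coleq} then gives $a=x$ and $b=y$, so we land in the left leg of $\WW(u)$; this is allowed since the target is a proposition. Finally, $P\times(\proj_2u=\lambda\_.x_0)$ gives the right leg of $\WW(u)$.
\item From right to left: use connectedness of $B\Sigma_2$ to merely get $K=\mathbf{2}$, hence $u=w_0$. For the right leg of $\WW(u)$, use $x=y$ to obtain $P$.
\end{enumerate}
Flattening the join in $\WW$ then exhibits $\Lam$ as the pushout of $B\Sigma_2\leftarrow B\Sigma_2\times(x=y)\to(x=y)\times B\Sigma_3$. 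Here I use $\sum_u\sum_K(\Shor(K,x,y)=u)\simeq B\Sigma_2$ and $\sum_u(\proj_2u=\lambda\_.x)\simeq B\Sigma_3$.

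\textbf{Step 3 (assembling $\LL$ and $D$).} Since $\rhobar$ is contractible, \eqref{eq:stage2} exhibits $\LL$ as the cofiber of the map $B\Sigma_2\to\Lam$, namely $K\mapsto(\Shor(K,x,y),\inl(K,\refl))$ in the $\WW$-description. This map is exactly the left leg of the pushout in Step 2. Pasting pushouts, $\LL$ is therefore the pushout of $\mathbf{1}\leftarrow B\Sigma_2\times(x=y)\to(x=y)\times B\Sigma_3$, which is the mapping cone $B\Sigma_3\hq B\Sigma_2$ fibred over the proposition $(x=y)$.

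Diagram \eqref{eq:stage1} then glues in $P\simeq(x=y)$ along $(B\Sigma_3\hq B\Sigma_2)\times P$. The result is the pushout of $\mathbf{1}\leftarrow\ldots$, which I expect to simplify to the join $\mathbf{1}*(x=y)$, or a similar cone over a proposition. That is contractible.

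The main obstacle is making the pasting in Step 3 honest. Two things must be checked. First, the vertical map of \eqref{eq:stage1} must agree, under the identifications above, with the inclusion of the $(x=y)$-part of $\LL$; this needs the same $\theta^x_K$ bookkeeping as in lemma \ref{l:PtoD}. Second, the glue data must line up. If this coherence becomes unmanageable, I would fall back to a direct argument, since contractibility of $D$ is a proposition. Take the center $\inx_\LL(\inx_\Lam(w_0,\inl\refl))$. Show that $D$ is a proposition by double pushout induction, using the fibrewise propositionality from Step 2. In every branch that yields $P$, invoke lemma \ref{l:PtoD}.
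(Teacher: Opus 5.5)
Your route is correct. It differs from the paper's mainly in how you treat $\Lam$.

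\textbf{The paper's route.} It stops at $\Lam\simeq\sum_u J(u)\simeq\sum_u\cb(u)$, via \eqref{eq:stage3} and \eqref{eq:stage5}, and presents $\LL$ as the mapping cone of $F:K\mapsto(\Shor(K,x,y),\dots)$. It then builds the contracting homotopy by hand. For $t=(u,v)$ it turns $\cb(u)$ into an inhabitant of $\WW(u)$; only this direction is needed. It eliminates that inhabitant, and in the left case uses path induction on $F(K)=t$ so that $p(F(K))$ is the cone's glue.

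\textbf{Your route.} You upgrade $\WW$ to a fibrewise equivalence $(w_0=u)*J(u)\simeq\WW(u)$ and flatten it. This bypasses \eqref{eq:stage4}--\eqref{eq:stage5}. It gives an explicit presentation of $\LL$ as the pushout of $\mathbf{1}\leftarrow B\Sigma_2\times(x=y)\to(x=y)\times B\Sigma_3$. More precisely, this is the cofiber of $r\mapsto(r,*)$ from $(x=y)$ into $(x=y)\times(B\Sigma_3\hq B\Sigma_2)$. It is a point, not empty, when $\neg(x=y)$, so ``fibred over $(x=y)$'' is slightly off. The price is the extra computation $\sum_K(x=y)\times((\lambda\_.x)\sqcup(\lambda\_.y)=\lambda\_.x)\simeq B\Sigma_2\times(x=y)$, which uses that $X$ is a set. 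The gain is a transparent structural picture of $\LL$ and $D$.

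\textbf{The flagged obstacle.} The coherence problem you raise at the end does not actually arise. You need neither to identify the vertical map of \eqref{eq:stage1} nor to verify $D\simeq\mathbf{1}*(x=y)$. Take the apex of $\LL$ as centre and do pushout induction on \eqref{eq:stage1}, then on your presentation of $\LL$. The following cases all come with an inhabitant of $P$, using $P\leftrightarrow(x=y)$:
\begin{itemize}
\item the $P$-leg;
\item the glue over $(B\Sigma_3\hq B\Sigma_2)\times P$;
\item the $(x=y)\times B\Sigma_3$-leg;
\item the glue over $B\Sigma_2\times(x=y)$.
\end{itemize}
In each of these, lemma \ref{l:PtoD} makes $D$ contractible, which supplies both the paths and the path-over coherences. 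The apex gets $\refl$.

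This is exactly how the paper avoids the $\theta^x_K$ bookkeeping. It closes your Step~3 directly, so your fallback is not needed as a separate argument.
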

\begin{proof}
    For $(u,v):\sum_{u:X^3\hq \Sigma_3}\cb(u)$, we can prove that $W(u)$ holds. Indeed, $W(u)$ is a proposition, so we can assume the untruncated $\cb(u)$, which gives us $u=\col(x,y)$, which lies in the left side of the join for $K=\mathbf{2}$. Furthermore, $\cb(w_0)$, $\ColR$ and $J(w_0)$ all hold, so $\rhobar$ is contractible and $\Lam\simeq\sum_{u:X^3\hq \Sigma_3}J(u)$ by \eqref{eq:stage3}, the equivalence preserves the first coordinate. The vertical map of \eqref{eq:stage2} is then the map $F:B\Sigma_2\to \sum_{u:X^3\hq \Sigma_3}\cb(u)$ given by $K\mapsto(\Shor(K,x,y),\dots)$. Composing that with the contraction of $\rhobar$ centered at the point $(x,y,\dots)$, we get an equivalence of pushout diagrams, presenting the space $L$ as the mapping cone of $F$. We will now prove the contractibility of $D$, choosing the vertex of $L$ as the center. Let $d:D$, proceed by pushout induction on $d$ as an element of the pushout in \eqref{eq:stage1}. The only nontrivial case is when $d$ comes from $L$, because otherwise $P$ holds and by lemma \ref{l:PtoD} $D$ is contractible, so we have a path. Now we need to find a path
    \begin{equation}
        p:\prod_{t:\sum_{u:X^3\hq \Sigma_3} \cb(u)}\left(\inx_L(\inx t)=_D\inx_L*\right)
    \end{equation}
    such that $p(F(K))$ is the gluing path. For a given $t=(u,v)$, we proceed by pushout induction on the inhabitant of $W(u)$. Once again, the only nontrivial case is the left one, since otherwise $P$ holds. We therefore have a given $K$ and a path $\Shor(K,x,y)=u$. Since $\cb(u)$ is a proposition, we have a path $q:F(K)=t$. By path induction on $q$, we can assume $F(K)\equiv t$ and define $p$ as the gluing path on $F(K)$.
\end{proof}
\begin{lemma}
    \label{l:Dcontr}
    The type $D$ is contractible.
\end{lemma}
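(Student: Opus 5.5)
We prove contractibility of $D$ by exhibiting a center and contracting every $d:D$ to it by iterated pushout induction along the presentations \eqref{eq:stage1}, \eqref{eq:stage2}, \eqref{eq:stage3} and \eqref{eq:stage5}. Lemmas \ref{l:PtoD} and \ref{l:colToD} dispose of every case carrying a witness of $P$ or of a collision $w_0=\col(x,y)$.

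\textbf{Reduction to a proposition.} Contractibility is a proposition, so we may freely use truncated hypotheses. The whole argument is a case analysis on which leg $d$ comes from. For a leg that supplies a proof of $P$, we conclude by lemma \ref{l:PtoD}. For a leg that supplies a proof of $\ColR$ (hence merely some $(a,b)$ with $w_0=\col(a,b)$), we conclude by lemma \ref{l:colToD}. The truncation on $\colAt$ causes no trouble because the goal is a proposition.

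\textbf{Choice of center and the trivial leg.} As center we take the point of $D$ coming from $\inx_\Lam(\inx_{\mathbf 1}*)$ in \eqref{eq:stage3}, pushed through $\LL$ and $D$; it corresponds to $(z_0,\inl\refl_{w_0})$. Rather than building an explicit path family and checking glue coherences, we show that every point $d$ either lies in a case where $D$ is already contractible, or can be connected to the center. Since $D$ is contractible exactly when it has a center and every point is equal to it, and the statement ``$d$ equals the center'' is not a proposition, we handle the glue cases as follows: whenever a glue case arises between two legs at least one of which yields $P$ or $\ColR$, the dependent identification is automatic because in that fibre $D$ is contractible, hence a set.

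\textbf{Walking the pushouts.}
In \eqref{eq:stage1}, the $P$ leg and the $(B\Sigma_3\hq B\Sigma_2)\times P$ glue carry $P$, so it suffices to treat $d=\inx_\LL(\ell)$.
In \eqref{eq:stage2}, the $\rhobar$ leg and the glue leg $B\Sigma_2\times\rhobar$ are handled by the left pushout of \eqref{eq:stage3}: each point of $\rhobar$ comes either from $P$ or from $\ColR$, and each leg is covered by one of the two lemmas. So it remains to treat $\ell$ coming from $\Lam$.
In the right pushout of \eqref{eq:stage3}, the $\mathbf 1$ leg is the center, with path $\refl$. The leg $\sum_u J(u)$ is, by \eqref{eq:stage4}–\eqref{eq:stage5}, a pushout whose legs carry $P\times B\Sigma_3$ or $\sum_u\cb(u)\simeq\ColR\times\dots$, so both legs are covered by the lemmas. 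The glue leg $J(w_0)$ is likewise covered, since $J(w_0)$ is a join of $\cb(w_0)$ (giving $\ColR$) and a $P$-component.

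\textbf{Main obstacle.} The delicate step is making the ``case carries a witness'' argument rigorous over the glue constructors. A dependent path over $\glue$ in a type family $d\mapsto(d=c)$ is not automatically trivial. It becomes trivial only once we know $D$ is contractible under a hypothesis available at that glue point, which makes $D$ a set so that all parallel paths agree.

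To arrange this, I would first prove the auxiliary statement $\Phi(d):\equiv\mathrm{isContr}(D)$ for all $d:D$, by the same induction. The motive is now a proposition, so glue cases vanish. In the $\mathbf 1$ leg of $\Lam$ this requires knowing $D$ is contractible from the single point $(z_0,\inl\refl)$, which is circular. The fix is to stratify instead: show that for every $d$, the proposition $\tr{P}\vee\tr{\ColR}\vee(d=\mathrm{center})$ holds, truncating the last disjunct. We then conclude with a little care: if $P$ or $\ColR$ holds, we are done by the lemmas. Otherwise every $d$ is merely equal to the center. Together with the fact that $Q(z_0)$'s join structure makes the loops trivial (the swaps in $\mathbf 3\simeq\mathbf 3$ force $P$ or a collision, exactly as in the $\SP^2$ case), this upgrades mere equality to contractibility. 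The glue cases for the truncated motive are free, which is the point of this formulation.
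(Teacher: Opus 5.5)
\textbf{The final ``stratified'' step is circular.} The argument you actually commit to at the end has a genuine gap. From the truncated motive you get, for every $d:D$, $\tr{P}\vee\tr{\ColR}\vee\tr{d=c}$. The first two disjuncts give $\Contr(D)$ by lemmas \ref{l:PtoD} and \ref{l:colToD}, hence $d=c$. So the net result is only $\prod_{d:D}\tr{d=c}$, i.e.\ $D$ is connected. (This also avoids the excluded-middle flavour of your ``otherwise'' split.)

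To upgrade connectedness to contractibility you must show that $c=_Dc$ is contractible. With $c=(z_0,\inl\refl_{w_0})$ we have $(c=_Dc)\simeq\sum_{p:z_0=z_0}(p_*(\inl\refl)=\inl\refl)$. Since $Q(z_0)$ is contractible, this is equivalent to $z_0=_{\SP^3X}z_0$. That is exactly the loop space whose contractibility the encode--decode argument is supposed to deliver. The analysis of $e:\mathbf{3}\simeq\mathbf{3}$ from lemma \ref{l:Qz0contr} only concerns paths $w_0=w_0$ in $X^3\hq\Sigma_3$, i.e.\ elements of the fibre $Q(z_0)$. It tells you nothing about loops in the base $\SP^3X$, for instance those passing through the glue constructors. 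So the proposed upgrade presupposes the theorem.

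\textbf{The detour is unnecessary.} Your ``Walking the pushouts'' paragraph, run with the untruncated motive $d\mapsto c=d$, is already the paper's proof, and it is complete. The worry in your ``Main obstacle'' paragraph does not arise, for two reasons.
\begin{itemize}
\item The only constructor without a witness is $\inx_{\mathbf{1}}*$, where you take $\refl$.
\item Every glue constructor has a parameter that itself yields $P$ or $\ColR$. This includes the one glue adjacent to the centre, indexed by $j:J(w_0)$: eliminating the join of propositions $J(w_0)=\cb(w_0)*(\dots)$ into $\Contr(D)$ gives $\cb(w_0)$, hence $\ColR$, or gives $P$.
\end{itemize}
So at every glue point $D$ is contractible, and all its path types are contractible. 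The required dependent identification, an equation in some $c=d'$, therefore holds automatically. Neither the circular $\Phi$ nor any truncation of the motive is needed.
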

\begin{proof}
    Express $D$ as a pushout resulting from diagrams \eqref{eq:stage1}, \eqref{eq:stage2} and \eqref{eq:stage3}. The center of contraction will then be $c:\equiv \inx_L(\inx_{\Lambda}(\inx_{\mathbf{1}}*))$. We pick a $d:D$ and find paths $c=d$ by iterated pushout induction:
    \begin{itemize}
        \item For the $P$ and the glue case, since $P$ holds, the claim follows by lemma \ref{l:PtoD}.
        \item Let $d=\inx_Ll$.
        \begin{itemize}
            \item Let $l=\inx_\rhobar r$. Once again the $P$ and the glue case are easy. In the $\ColR$ case, we have $(x,y,c):\ColR$, and once again we can prove $D$ is contractible by eliminating the truncation of $c$ and using lemma \ref{l:colToD}. The glue case for $l$ is analogous, since it also makes $\rhobar$ hold.
            \item Let $l=\inx_\Lam L$. 
            \begin{itemize}
                \item Let $L=\inx_{\sum_{u:X^3\hq \Sigma_3}J(u)}(u,p)$. Then $p:\cb(u)*(P\times \proj_2u=\lambda\_.x_0)$. Once again the right and glue case are easy, since $P$ holds. In the left case, we can prove $D$ is contractible by eliminating the truncation from $\cb(u)$ and using lemma \ref{l:colToD}. The glue case for $L$ is analogous, since it makes $J(w_0)$ hold. 
                \item Let $L=\inx_{\mathbf{1}}*$. Then the path is $\refl$.
            \end{itemize}
        \end{itemize}
    \end{itemize}
    Since in all cases $D$ is contractible or the path is $\refl$, we have constructed a uniform path $\prod_{d:D}c=d$.
\end{proof}
As in the proof for $\SP^2$, all that is left is to apply the fundamental theorem of identity types and check that $Q(z_0)=(z_0=z_0)$ is indeed contractible.
\begin{lemma}
    \label{l:Qz0contr}
    The type $Q(z_0)\equiv (w_0=w_0)*(J(w_0))$ is contractible.
\end{lemma}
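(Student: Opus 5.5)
We mirror the final step of the proof of theorem \ref{t:sp2set}: show that every inhabitant of $(w_0=w_0)*J(w_0)$ equals the centre $\inl\refl_{w_0}$. The type $J(w_0)$ is a join of propositions, hence a proposition. By fact \ref{f:joincontr} (with $B:\equiv J(w_0)$ and $a:\equiv\refl_{w_0}$), contractibility of $Q(z_0)$ reduces to constructing, for every $p:w_0=w_0$, a path $\inl\refl_{w_0}=\inl p$ in $Q(z_0)$. Whenever $J(w_0)$ is inhabited the join is contractible, so any such $p$ is handled at once. The task is therefore to show that each loop $p$ is either $\refl_{w_0}$ or yields a witness of $J(w_0)$.

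As before, $p$ amounts to an equivalence $e:\mathbf{3}\simeq\mathbf{3}$ commuting with $(x_0,x_1,x_2)$. Since $\mathbf{3}$ has decidable equality, we can split on the six permutations by inspecting $e$ on each point.
\begin{itemize}
    \item If $e$ is the identity, then the commutation data lives in $x_i=x_i$ in the set $X$, so it is trivial. Hence $p=\refl_{w_0}$, since $X^3\hq\Sigma_3$ identifies paths with equivalences plus homotopies.
    \item If $e$ is a $3$-cycle, the commutation relations give $x_0=x_1$ and $x_1=x_2$, i.e.\ $P$, together with $\proj_2w_0=\lambda\_.x_0$ by function extensionality. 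This inhabits the right side of $J(w_0)$.
    \item If $e$ is a transposition fixing $i$ and swapping $j,k$, we get $x_j=x_k$. We then exhibit $w_0=\col(x_j,x_i)$ using the equivalence $\mathbf{2}\sqcup\mathbf{1}\simeq\mathbf{3}$ sending $\inl$ to $\{j,k\}$ and $\inr*$ to $i$; the labels match because $x_j=x_k$. This gives $\cb(w_0)$ with $a\equiv x_j$, $b\equiv x_i$, taking both components to be that path, so the left side of $J(w_0)$ holds.
\end{itemize}

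The main obstacle is doing this cleanly in Book HoTT. We need the explicit description of $w_0=w_0$ as a $\Sigma$-type of an equivalence and a homotopy, via univalence and the characterization of paths in $B\Sigma_3$. We also need that the identity case really yields $\refl$: this requires that the equivalence component equal to $\id$ forces the path in $B\Sigma_3$ to be $\refl$, which follows from univalence, and that the homotopy component is trivial because $X$ is a set. The case split itself is only a finite decision on $e(0),e(1),e(2)$. The transposition case also needs the explicit equivalence $\mathbf{2}\sqcup\mathbf{1}\simeq\mathbf{3}$ to be built for each of the three choices of fixed point, which is routine.
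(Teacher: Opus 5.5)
Your proposal is correct and follows the paper's proof essentially step for step. Both arguments reduce, via fact \ref{f:joincontr}, to showing $\inl\refl_{w_0}=\inl h$ for every loop $h$, read $h$ as a label-preserving permutation of $\mathbf{3}$, and split into three cases: the identity gives $\refl$, a $3$-cycle gives $P$, and a transposition gives $\cb(w_0)$ via $w_0=\col(x_j,x_i)$. The only cosmetic difference is that the paper first rewrites $J(w_0)$ as $\cb(w_0)*P$, whereas you inhabit the right summand $P\times(\proj_2 w_0=\lambda\_.x_0)$ directly using function extensionality.
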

\begin{proof}
    The type $Q(z_0)$ is inhabited by $\inl\refl$. Notice $J(w_0)=\cb(w_0)*P$. Since it is a proposition, by fact \ref{f:joincontr} it suffices to prove that for $h:w_0=w_0$, $\inl\refl=_{Q(z_0)}\inl h$. We can interpret $h$ as an equivalence $e:\mathbf{3}\simeq\mathbf{3}$ commuting with the labelling functions. There are three kinds of equivalences, we consider cases based on them.
    \begin{itemize}
        \item If $e$ is an identity, $h=\refl$.
        \item If $e$ is a 3-cycle, by the commutation with labelling we get $x_0=x_1=x_2$, which gives us $P$ and $J(w_0)$.
        \item If $e$ is a transposition, say for $a,b,c:\mathbf{3}$ we have $e(a)=b,e(b)=a,e(c)=c$ and $x_m:\equiv (\proj_2w_0)(m)$ for $m=a,b,c$. Then by the commutation relation we have $x_a=x_b$ and $\lVert w_0=\linebreak\col(x_a,x_c)\rVert$ holds, which gives us $\cb(w_0)$ and $J(w_0)$.\qedhere
    \end{itemize}
\end{proof}
Now we can finish the proof of theorem \ref{t:sp3set}.
\begin{proof}
    By the fundamental theorem of identity types, since $\sum_{z:\SP^3X}Q(z)$ is contractible by lemma \ref{l:Dcontr}, the loop space $z_0=_{\SP^3X}z_0$ is equal to $Q(z_0)$. Then, by lemma \ref{l:Qz0contr}, it is contractible.
\end{proof}

\end{document}